\newcommand{\Var}{{\rm{Var}_{\mathbb{C}}}}
\newcommand{\ind}{{\mbox{\rm ind}}}
\def\1{\underline{1}}
\def\R{{\mathbb R}}
\def\Z{{\mathbb Z}}
\def\RR{{\mathcal R}}
\def\conjsub{{\mbox{conjsub\,}}}
\def\ind{{\rm ind}}
\def\chiun{{\chi^{\rm un}}}
\def\G{{\mathcal{G}}}
\def\GG{{\mathcal{G}}}
\newtheorem{theorem}{Theorem}
\newtheorem{proposition}{Proposition}
\newtheorem{definition}{Definition}
\newenvironment{remark}
{\smallskip\noindent{\bf Remark\/}.}{\smallskip\par}
\newenvironment{proof}
{\noindent{\bf Proof\/}.}{{ $\square$}\smallskip\par}
\title{The universal Euler characteristic of $V$-manifolds
\footnote{Math. Subject Class.: 57R18, 55M35.
Keywords: finite group actions, $V$-manifold, orbifold, additive topological invariant, lambda-ring,
Macdonald equation.}
}
\author{S.M.~Gusein-Zade \thanks{The work of the first author
(Sections~\ref{sec:Euler_manifolds}, \ref{sec:Universal_Euler}, \ref{sec:lambda-power})
was supported by the grant 16-11-10018 of the Russian Science Foundation.
Address: Moscow State University, Faculty
of Mechanics and Mathematics, GSP-1, Moscow, 119991, Russia. E-mail:
sabir\symbol{'100}mccme.ru} \and I.~Luengo \thanks{The last two authors were partially
supported by a competitive Spanish national grant MTM2016-76868-C2-1-P.
Address:  ICMAT (CSIC-UAM-UC3M-UCM), Dept. of Algebra, Geometry and Topology, Complutense University of Madrid,
Plaza de Ciencias 3, Madrid, 28040, Spain.
E-mail: iluengo\symbol{'100}mat.ucm.es} \and
A.~Melle-Hern\'andez \thanks{Address:  Instituto de Matem\'a¡tica Interdisciplinar (IMI), Dept. of Algebra, Geometry and Topology, 
Complutense University of Madrid,
Plaza de Ciencias 3, Madrid, 28040, Spain. E-mail: amelle\symbol{'100}mat.ucm.es}}
\date{}
\begin{document}
\def\eps{\varepsilon}

\maketitle

\begin{abstract}

The Euler characteristic is the only additive topological invariant for spaces of certain sort, in particular,
for manifolds with some finiteness properties. A generalization of the notion of a manifold is the notion of
a $V$-manifold. Here we discuss a universal additive topological invariant of $V$-manifolds: the universal
Euler characterictic. It takes values in the ring generated (as a $\Z$-module) by isomorphism classes of finite
groups. We also consider the universal Euler characteristic on the class of locally closed equivariant unions
of cells in equivariant CW-complexes. We show that it is a universal additive invariant satisfying a certain
``induction relation''. We give Macdonald type equations for the universal Euler characteristic for $V$-manifolds
and for cell complexes of the described type.
\end{abstract}

%%%%%%%%%%%%%%%%%%%%%%%%%%%%%%%%%%%%%%% Section 1
\section{Introduction}\label{sec:Intro}
%%%%%%%%%%%%%%%%%%%%%%%%%%%%%%%%%%%%%%%

The Euler characteristic $\chi(\cdot)$ (defined as the alternating sum of the ranks of the cohomology groups with compact
support) is the only \emph{additive} topological invariant for spaces of certain sort: see. e.~g., \cite{Watts},
see also \cite[Proposition~2]{SGZ-2017}. In particular, the Euler characteristic is the only additive invariant
of manifolds with some finiteness properties: see below. This property has some generalizations. For example,
the equivariant Euler characteristic with values in the Burnside ring $A(G)$ of a finite group $G$ is the only 
additive topological invariant of spaces with $G$-actions (see, e.~g., \cite{SGZ-2017}).

A generalization of the notion of a manifold is the notion of a $V$-manifold (that is of a (real) orbifold:
locally defined as the quotient of a manifold by a finite group action) introduced initially in \cite{Satake1}.
There are a number of additive invariants defined for $V$-manifolds, e.~g., the Euler--Satake characteristic:
\cite{Satake2}, the orbifold Euler characteristic: \cite{Dixon_et_al-1}, \cite{Dixon_et_al-2}, \cite{AS}, \cite{HH},
the higher order (orbifold) Euler characteristics: \cite{BrF}, \cite{Tamanoi}, the $\Gamma$--Euler--Satake
characteristic:
\cite{Farsi+}.

Here we discuss the universal additive topological invariant $\chiun$ of $V$-manifolds: a sort of a universal
(topological) Euler characteristic for them. It takes values in the ring $\RR$ generated (as a $\Z$-module)
by isomorphism classes of finite groups. 

We also consider the universal Euler characteristic $\chiun$ on the Grothendieck ring $K_0^{\rm fGr}(\Var)$ of
quasi-projective varieties with finite groups actions (\cite{fGr}) and on the class of {\em equivariant cell
complexes}: locally closed unions  of cells in equivariant CW-complexes in the sense of \cite{TtD}. In the latter
case we show that $\chiun$ is a universal additive invariant satisfying a certain ``induction relation''.

The classical Euler characteristic satisfies the Macdonald equation for the generating series of the Euler
characteristics of the symmetric powers of a topological space:
$$
1+\sum_{n=1}^{\infty} \chi(S^nX)\cdot t^n=(1-t)^{-\chi(X)}.
$$

Also one has a Macdonald type equation for the Euler characterictics of the configuration spaces of points on $X$.
Let $M_n X= (X^n\setminus \Delta)/S_n$ be the configuration space of (unorderd) 
$n$-tuples of points in $X$ ($\Delta$ is the big diagonal in the Cartesian  power in $X^n$ consisting
of points $\bar{x}=(x_1, \ldots x_n)\in X^n$ with at least two coinciding components). One has
$$
1+\sum_{n=1}^{\infty} \chi(M_nX)\cdot t^n=(1+t)^{\chi(X)}.
$$

Analogues of these  equations for other (additive) invariants with values in rings different from the ring 
of integers (say, for the equivariant Euler characteristic or for the generalized (motivic) Euler characteristic
of complex quasi-projective varieties) are formulated in terms of power structures over the rings of values:
\cite[Lemma 1]{GLM-MMJ-2016}, \cite{GLM-Stek}. A power structure over a ring is closely related with
(and defined by) a $\lambda$-ring structure on it. 
Analogues of these  equations for the universal Euler characterictic are formulated in terms  of different
$\lambda$-ring structures on $\RR$.
We discuss these $\lambda$-ring structures on $\RR$ and the corresponding  power structures. We give
Macdonald type equations for the universal Euler characteristic $\chiun$ for $V$-manifolds and for equivariant
cell complexes.

%%%%%%%%%%%%%%%%%%%%%%%%%%%%%%%%%%%%%%%%%%%%%%%%%%%%%%%%%%%%%%%%%%%%%% Section 2
\section{Euler characteristic of manifolds}\label{sec:Euler_manifolds}
%%%%%%%%%%%%%%%%%%%%%%%%%%%%%%%%%%%%%%%%%%%%%%%%%%%%%%%%%%%%%%%%%%%%%%

The Euler characteristic is defined for manifolds with some finiteness properties. To fix a class of such manifolds, let us
consider ($C^{\infty}$-) manifolds which are interiors of compact manifolds with boundaries. A submanifold of
such a manifold is the interior of a (closed) submanifold in a manifold with boundary, that is of a submanifold
transversal to the boundary. (We permit a submanifold to be of the same dimension as the manifold itself. In this
case the submanifold is a connected component of the manifold.) In what follows we consider only manifolds from
this class. Let $M$ be a manifold and let $N$ be a (closed) submanifold of $M$. One has the following additivity
property of the Euler characteristic:
$$
\chi(M)=\chi(N)+\chi(M\setminus N)\,.
$$
(Pay attention that $M\setminus N$ is also a manifold from the described class.) One has the inverse statement.

\begin{proposition}\label{prop:unique_for_manifolds}
 Let $I$ be a topological invariant of manifolds which possesses the additivity property:
$$
I(M)=I(N)+I(M\setminus N)
$$
for a submanifold $N\subset M$. Then $I(M)=\chi(M) a$, where $a=I(pt)$.
\end{proposition}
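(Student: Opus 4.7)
The plan is to compute $I$ on Euclidean spaces and then reduce general $M$ to them via induction on a handle decomposition of the compact model $\bar{M}$.

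From the additivity hypothesis I immediately obtain $I(\emptyset) = 0$ (taking $N = M$), additivity on connected components, and diffeomorphism invariance. The first key step is computing $I(\R^n) = (-1)^n I(\mathrm{pt})$ by induction on $n$: for $n = 1$, decomposing $\R = \{0\} \sqcup (-\infty,0) \sqcup (0,\infty)$, with $\{0\}$ a $0$-dimensional closed submanifold and each half-line diffeomorphic to $\R$, yields $I(\R) = I(\mathrm{pt}) + 2\,I(\R)$ and hence $I(\R) = -I(\mathrm{pt})$; for $n \geq 2$, slicing one factor of $\R^n = \R \times \R^{n-1}$ and using $(0,\infty) \times \R^{n-1} \cong \R^n$ gives $I(\R^n) = -I(\R^{n-1})$. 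Further additive decompositions along the same lines yield $I(S^m) = \chi(S^m) I(\mathrm{pt})$ from $S^m = \{\mathrm{pt}\} \sqcup \R^m$, and more generally $I(X) = \chi(X) I(\mathrm{pt})$ on any space built additively from Euclidean cells (in particular on products such as $S^{k-1} \times \R^{n-k}$, using the codimension-$(k-1)$ submanifold $\{\mathrm{pt}\} \times \R^{n-k}$).

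For general $M$ with compact model $\bar{M}$, I fix a handle decomposition of $\bar{M}$ and induct on the total number of handles. The base case $\bar{M} = D^n$ (one $0$-handle) gives $M = \R^n$, dispatched by the previous paragraph. For the inductive step, attach a $k$-handle $H = D^k \times D^{n-k}$ along $A = \partial D^k \times D^{n-k}$ to pass from $\bar{M}'$ to $\bar{M}$. I invoke additivity with the codimension-$1$ submanifold $N = A^\circ = S^{k-1} \times \R^{n-k} \subset M$: its compactification $\bar{N} = A$ is a closed compact $(n-1)$-submanifold of $\bar{M}$ whose boundary $\partial\bar{N} = S^{k-1} \times S^{n-k-1}$ (the corner locus of the handle) lies on $\partial\bar{M}$ and is transversal there. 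A direct set-theoretic computation gives $M \setminus N = M' \sqcup \R^n$, so additivity yields $I(M) = I(N) + I(M') + I(\R^n)$. Inserting the inductive hypothesis $I(M') = \chi(M') I(\mathrm{pt})$, the values $I(N) = \chi(N) I(\mathrm{pt})$ and $I(\R^n) = (-1)^n I(\mathrm{pt})$ from the previous paragraph, and the identity $\chi(M) = \chi(M') + (-1)^{n-k}$ (which follows from Poincar\'e--Lefschetz duality relating the compactly supported $\chi$ to the handle count of $\bar{M}$), a routine calculation gives $I(M) = \chi(M) I(\mathrm{pt})$.

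The principal obstacle is selecting the correct submanifold $N$ in the inductive step. The naive choice $N = M'$ fails, because $M'$ is codimension-$0$ in $M$ but not a connected component, hence not a submanifold in the paper's strict sense; and the raw set-theoretic piece $M \setminus M' = D^k \times \R^{n-k}$ is a manifold-with-boundary, likewise not a submanifold in that sense. The correct choice is the thickened attaching sphere $N = A^\circ = S^{k-1} \times \R^{n-k}$: its compactification $\bar{N} = A$ satisfies the required transversality condition along the handle's corner locus (a routine local check in the product coordinates on $H$), and this $N$ cleanly separates $M$ into the three pieces $N$, $M'$, and $\R^n$ whose $I$-values are all under control via the first paragraph and the inductive hypothesis.
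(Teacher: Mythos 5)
Your proof is correct, and it reaches the conclusion by a genuinely different global reduction than the paper, even though both arguments share the same kernel. The computation you do first --- bisecting $\R^n$ along $\{0\}\times\R^{n-1}$ to get $I(\R^n)=(-1)^nI(\mathrm{pt})$ --- is exactly the paper's key step, where an open $k$-ball is cut by a $(k-1)$-ball into two $k$-balls to give $I(\sigma^k)=2I(\sigma^k)+I(\sigma^{k-1})$. Where you diverge is in how the general $M$ is reduced to this computation: the paper cuts $M$ into open cells by submanifolds, using a Morse function on $\widehat M$ together with an induction on dimension to decompose the regular level sets, and then sums $(-1)^{\dim}$ over cells; you instead induct on the number of handles of $\bar M$ and at each step cut only along the single hypersurface $N=S^{k-1}\times\R^{n-k}$ (the open attaching tube), so that $M\setminus N=M'\sqcup\R^n$. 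Your route outsources the decomposition to the standard existence of handle decompositions and avoids the paper's somewhat delicate (and only sketched) construction of a cell-cutting by submanifolds, at the price of an explicit Euler-characteristic bookkeeping identity and a corner-smoothing/transversality check for $\bar N=A$ at $\partial\bar M$, which is indeed routine. Two small points to tidy up: the base case of your handle induction should be the empty manifold (or you should note that a $0$-handle is the case $k=0$, $N=\emptyset$, of the inductive step), since a decomposition may have several $0$-handles; and the identity $\chi(M)=\chi(M')+(-1)^{n-k}$ needs no appeal to Poincar\'e--Lefschetz duality --- it follows from additivity of $\chi_c$ over the very same decomposition, since $\chi_c(N)+\chi_c(\R^n)=(-1)^{n-k}+(-1)^{n-1}+(-1)^n=(-1)^{n-k}$. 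Neither affects the validity of the argument.
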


\begin{proof}
 First let us reduce the statement to the corresponding statement for cells, i.~e., for manifolds diffeomorphic
 to open balls. For that we will cut a given $n$-dimensional manifold $M^n$ by submanifolds into pieces diffeomorphic
 to cells. A one-dimensional manifold is a (finite) union of open segments and circles and there is no problem
 to decompose it into cells. Assume that this is possible for manifolds of dimension less than $n$. Let $M$ be
 the interior of a manifold $\widehat{M}$ with boundary and let $f:\widehat{M}\to\R$ be a Morse function on
 $\widehat{M}$ equal to zero on the boundary $\partial\widehat{M}$ and positive on $M$. Let
 $0<c_1<c_2<\ldots<c_r$ be the critical values of $f$ (and let $c_0=0$). Let $U_i$ be small open balls around
 the corresponding critical points $P_i$. The manifold $M$ is the union of the manifolds
 $M_i=f^{-1}((c_{i-1}+\eps, c_i-\eps))$, $M_i'=f^{-1}((c_i-\eps, c_i+\eps))$ and
 $N_{i\pm}=f^{-1}(c_i\pm\eps)$ (we take $\eps$ small enough). We have to cut these manifolds
 into cells (using submanifolds). For the manifolds $N_{i\pm}$ this is possible because of the assumption.
 The manifold $M_i$ is (diffeomorphic to) the cylinder over the manifold $N_{i-}$ and a method to cut $N_{i-}$
 into cells (by submanifolds) can be extended to $M_i$ in an obvious way. The intersection $M_i'\cap U_i$ is
 diffeomorphic to a cell. The complement $M_i'\setminus U_i$ is (diffeomorphic to) the cylinder over
 $N_{i-}\setminus U_i$. A method to cut $N_{i-}\setminus U_i$ gives a method to cut $M_i'\cap U_i$.
 (Here we apply an obvious version of the procedure to $N_{i-}\setminus U_i$ which is a manifold with boundary.)
 
 The additivity property permits to prove the statement for cells (open balls):
 \begin{equation}\label{eq:I_of_cell}
  I(\sigma^k)=(-1)^k I({\rm pt})\,.
 \end{equation}
 Assume that~(\ref{eq:I_of_cell}) is proved for cells of dimension less than $k$, in particular,
 $I(\sigma^{k-1})=(-1)^{k-1}I({\rm pt})$. The ball $\sigma^k$ can be cut by a submanifold diffeomorphic to
 $\sigma^{k-1}$ into two manifolds diffeomorphic to $\sigma^k$. Therefore
 $$
 I(\sigma^k)=2I(\sigma^k)+I(\sigma^{k-1})\,,
 $$
 what gives~(\ref{eq:I_of_cell}). 
\end{proof}

%%%%%%%%%%%%%%%%%%%%%%%%%%%%%%%%%%%%%%%%%%%%%%%%%%%%%%%%%%%%%%%%%%% Section 3
\section{$V$-manifolds (real orbifolds)}\label{sec:V-manifolds}  %%  and equivariant cell complexes
%%%%%%%%%%%%%%%%%%%%%%%%%%%%%%%%%%%%%%%%%%%%%%%%%%%%%%%%%%%%%%%%%%%

Let us give some definitions in a form appropriate for a discussion below.

For a $G$-space $X$, that is a topological space $X$ with a (left) $G$-action, and for an embedding
$G\subset H$ ($G$ and $H$ are finite groups), let {\em the induction} $\ind_G^H X$ be the $H$-space
defined as the quotient
$$
\ind_G^H X=H\times X/\sim\,,
$$
where $(h_1,x_1)\sim (h_2,x_2)$ if (and only if) there exists $g\in G$ such that $x_1=gx_2$, $h_1=h_2g^{-1}$
(with an obvious $H$-action).
As a topological space $\ind_G^H X$ is the union of several ($\frac{\vert H\vert}{\vert G\vert}$) copies of $X$.
If $X$ is, say, a ($C^{\infty}$-) manifold or a complex quasi-projective variety, the space $\ind_G^H X$ is
of the same type.

\begin{definition}
 An ($n$-dimensional)  uniformizing system on a topological space $X$ is a quadriple
 $(U,\widetilde{U},G,\varphi)$, where $U$ is an open subset of $X$, $G$ is a finite group, $\widetilde{U}$ is
 a smooth ($C^{\infty}$-) $n$-dimensional manifold with a $G$-action, and $\varphi$ is a map $\widetilde{U}\to U$
 such that $\varphi(gx)=\varphi(x)$ (that is $\varphi$ factorizes through a map $p_{\varphi}:\widetilde{U}/G\to U$)
 and the corresponding map $p_{\varphi}$ is a homeomorphism.
\end{definition}

\begin{remark}
 In some cases one adds the condition that the fixed point set of each element of $G$ has codimension at least two
 in $\widetilde{U}$. This restriction is not necessary in this paper and it is more convenient not to require it.
\end{remark}

\begin{definition}
 Two uniformizing systems $(U',\widetilde{U}',G',\varphi')$ and $(U'',\widetilde{U}'',G'',\varphi'')$ on $X$ are
 equivalent if for any point $x\in U'\cap U''$ there exists a neighbourhood $U$ of $x$ in $U'\cap U''$,
 a group $G$ contained both in $G'$ and in $G''$ (that is with embeddings into $G'$ and into $G''$)
 and a uniformizing system $(U,\widetilde{U},G,\varphi)$ such that the $G'$-manifolds $\ind_G^{G'} \widetilde{U}$
 and $(\varphi')^{-1}(U)$ are isomorphic over $U$ (that is by an isomorphism commuting with the projections to $U$)
 and the $G''$-manifolds $\ind_G^{G''} \widetilde{U} $ and $(\varphi'')^{-1}(U)$ are isomorphic over $U$ as well.
\end{definition}

\begin{definition}
 A ($V$-manifold) atlas on a topological space $X$ is a collection of $n$-dimensional uniformizing systems
 $\{(U_{\alpha},\widetilde{U}_{\alpha},G_{\alpha},\varphi_{\alpha})\}$ on $X$ such that
 $\bigcup_{\alpha}U_{\alpha}=X$ and any two uniformizing systems from the collection are equivalent.
\end{definition}

\begin{definition}
 Two atlases on $X$ are  equivalent if their union is an atlas on $X$ as well.
\end{definition}

\begin{definition} (see \cite{Satake1}, \cite{ChenRuan})
 An $n$-dimensional $V$-manifold $Q$ is a %% paracompact 
 separable Hausdorff space $X=X_Q$ with an equivalence class of $n$-dimensional atlases on it.
\end{definition}

One can define in a natural way the notion of a $V$-manifold with boundary: see \cite{Satake2},
\cite[Appendix]{ChenRuan}. In order to ensure that the topological characteristics discussed below
are defined, one has to empose certain finiteness conditions on $V$-manifolds under consideration.
We will assume that in what follows all $V$-manifolds are interiors of compact $V$-manifolds with
boundaries. (For short we will call them {\em tame}.)

The universal Euler characteristic as well as other invariants discussed below can also be regarded as homomorphisms
from the {\em Grothendieck ring} $K_0^{\rm fGr}(\Var)$ {\em of finite group actions} defined in \cite{fGr}.
The Grothendieck ring $K_0^{\rm fGr}(\Var)$ is the Abelian group generated by the classes $[(X,G)]$ of quasi-projective
$G$-varieties for all finite groups $G$ modulo the relations
\begin{enumerate}
 \item[1)] if $(X,G)\cong(X',G')$ (that is if  there exist a group isomorphism $\alpha:G\to G'$ and an (algebraic)
 isomorphism $\psi:X\to X'$ such that $\psi(gx)=\alpha(g)\psi(x)$), then $[(X,G)]=[(X',G')]$;
 \item[2)] if $Y$ is a Zariski closed $G$-subset of $X$, then $[(X,G)]=[(Y,G)]+[(X\setminus Y,G)]$;
 \item[3)] if $G$ is a subgroup of a finite group $H$ and $X$ is a $G$-variety, then $[(X,G)]=[(\ind_G^H X,H)]$.
\end{enumerate}
The multiplication in $K_0^{\rm fGr}(\Var)$ is defined by the Cartesian product of the varieties and of the
groups acting on them.

It is convenient to discuss some properties of the universal Euler characteristic in a purely topological
setting. For that we will consider a sort of nice topological spaces with finite group actions. The notion
of an equivariant $CW$-complex was introduced in \cite{TtD}. An equivariant $CW$-complex with a finite group $G$
action is a $CW$-complex possessing, in particular, the following property: if $g\sigma=\sigma$ for  a cell
$\sigma$ of the complex, then $g_{\vert\sigma}$ is the identity transformation.

\begin{definition}
 An  equivariant cell complex is an  invariant locally closed union of cells in a finite equivariant
 (with respect to a finite group) $CW$-complex.
\end{definition}

A quasi-projective $G$-variety or a (real) semialgebraic $G$-set ($G$ is a finite group) can be represented as
an equivariant cell complex. For an equivariant cell complex its Euler characteristic, equivariant Euler
characteristic, orbifold Euler characteristic, {\dots}  are well defined: see below.

%%%%%%%%%%%%%%%%%%%%%%%%%%%%%%%%%%%%%%%%%%%%%%%%%%%%%%%%%%%%%%%%%%%%%%%%% Section 4
\section{Additive invariants of $V$-manifolds}\label{sec:additive_of_V-manifolds}
%%%%%%%%%%%%%%%%%%%%%%%%%%%%%%%%%%%%%%%%%%%%%%%%%%%%%%%%%%%%%%%%%%%%%%%%%

There are a number of additive invariants defined for $V$-manifolds.

For a $G$-space $X$ ($G$ is a finite group) and for a point $x\in X$, let $G_x=\{g\in G:gx=x\}$ be the isotropy
subgroup of $x$. For a subgroup $H\subset G$, let $X^H=\{x\in X: hx=x{\text{ for all }}h\in H\}$ be the fixed
point set of the subgroup $H$ and let $X^{(H)}=\{x\in X: G_x=H\}$ be the subspace of points with the isotropy
subgroup $H$ ($X^{(H)}\subset X^H$). For a conjugacy class $[H]$ of subgroups of $G$, let
$X^{([H])}=\{x\in X: G_x\in[H]\}$. Let $\G$ be the set of the isomorphism classes of finite groups.

Let $Q$ be a (tame) $V$-manifold. For each point $x\in Q$ one associates the isotropy (sub)group $G_x$.
For a finite group $G$, let $Q^{(G)}=\{x\in Q: G_x\cong G\}$. One can see that the $V$-manifold $Q^{(G)}$
is a global quotient (under an action of the group $G$). Moreover, its reduction is the usual ($C^{\infty}$-)
manifold (with the action of the trivial group).

The {\em Euler-Satake characteristic} of $Q$ (\cite{Satake2}) is defined by
\begin{equation}\label{eq:Euler-Satake}
\chi^{\rm ES}(Q)=\sum_{\{G\}\in \G}\frac{1}{\vert G\vert}\chi(Q^{(G)}).
\end{equation}

The {\em orbifold Euler characteristic} (defined in, e.~g., \cite{AS}, \cite{HH}) can be defined for a
$V$-manifold  by 
\begin{equation*}
\chi^{\rm orb}(Q)=\sum_{\{G\}\in \G}\chi^{\rm orb}(G/G,G)\cdot \chi(Q^{(G)})\,,
\end{equation*}
where $\chi^{\rm orb}(G/G,G)$ is the orbifold Euler characteristic of the one-point $G$-set $G/G$ (in the
sense of \cite{AS}, \cite{HH}). If all the isotropy groups of points of $Q$ are Abelian, one has
\begin{equation*}
\chi^{\rm orb}(Q)=\sum_{\{G\}\in \G}\vert G\vert\cdot \chi(Q^{(G)})\,.
\end{equation*}

The {\em higher order} orbifold {\em Euler characteristics} $\chi^{(k)}(X,G)$ of a $G$-space $(X,G)$
were defined in \cite{BrF}, \cite{Tamanoi}. For $k=0,1$, one has $\chi^{(0)}(X,G)=\chi(X/G)$,
$\chi^{(1)}(X,G)=\chi^{\rm orb}(X,G)$. (We follow the numbering used in \cite{Tamanoi}.) For a $V$-manifold
they can be defined by
\begin{equation*}
\chi^{(k)}(Q)=\sum_{\{G\}\in \G}\chi^{(k)}(G/G,G)\cdot \chi(Q^{(G)})\,.
\end{equation*}
If all the isotropy groups of points of $Q$ are Abelian, one has
\begin{equation*}
\chi^{(k)}(Q)=\sum_{\{G\}\in \G}\vert G\vert^k\cdot \chi(Q^{(G)})\,.
\end{equation*}

One can see that the Euler-Satake characteristic~(\ref{eq:Euler-Satake}) can be regarded as the Euler characteristic
of order $(-1)$. This fits to the definition of the $\Gamma$-Euler-Satake characteristic $\chi^{\rm ES}_{\Gamma}(Q)$
of a $V$-manifold for a group $\Gamma$ in \cite{Farsi+}: for $\Gamma=\Z^{k+1}$ one gets the Euler characteristic
of order $k$; for $\Gamma=\{1\}$ (i.~e., $\Gamma=\Z^0$), one gets the Euler--Satake characteristic.

All these characteristics possess the additivity and the multiplicativity properties: if $Q'$ is a (closed)
$V$-submanifold of a $V$-manifold $Q$, one has $\chi^{\bullet}(Q)=\chi^{\bullet}(Q')+\chi^{\bullet}(Q\setminus Q')$;
if $Q_1$ and $Q_2$ are $V$-manifolds, one has
$\chi^{\bullet}(Q_1\times Q_2)=\chi^{\bullet}(Q_1)\cdot \chi^{\bullet}(Q_2)$. (Here $\chi^{\bullet}$ means
$\chi^{\rm ES}$, $\chi^{\rm orb}$, \dots)

All these invariants can be defined on the Grothendieck ring $K_0^{\rm fGr}(\Var)$ of quasi-projective varieties with
finite  groups actions so that they are ring homomorphism from $K_0^{\rm fGr}(\Var)$ to the ring $\Z$ of integers.
Moreover, all of them can be defined for equivariant cell complexes. For example, if $X$ is an equivariant cell
complex with an action of a finite group $G$, then its orbifold Euler characteristic can be defined by the equation
$$
\chi^{\rm orb}(X,G)=\frac{1}{\vert G\vert}\sum_{{(g,h)\in G\times G:}\atop{\\gh=hg}}\chi(X^{\langle g,h\rangle})\,,
$$
where $\langle g,h\rangle$ is the subgroup of $G$ generated by $g$ and $h$, or by the equation
$$
\chi^{\rm orb}(X,G)=\sum_{[H]\in{\tiny{\conjsub}} G}\chi(X^{([H])}/G)\chi^{\rm orb}(G/H,G)\,,
$$
where $\conjsub G$ is the set of conjugacy classes of subgroups of $G$.

%%%%%%%%%%%%%%%%%%%%%%%%%% Section 5
\section{The universal Euler characteristic}\label{sec:Universal_Euler}
%%%%%%%%%%%%%%%%%%%%%%%%%%

Let $\GG$ be the set of al  isomorphisms classes of finite groups and let $\RR$ be 
be the free Abelian group generated by the  elements $T^G$ correspondig to the classes  $\{ G\}\in \RR$.
We will write an element of $\RR$ as a finite sum of the form 
$\sum_{ \{G\}\in \G} a_G T^G$, where $a_G\in \Z$. One has a natural multiplication  on $\RR$ defined by
$T ^{G'}\cdot T^{G''}=T^{G' \times G''}$. 
Thus $\RR$ is a ring. Acording to the Krull-Schmidt theorem each finite group has a unique
 representation as the   product of indecomposable finite groups. 
 Let $\GG_{ind}$ be the set of  
 the isomorphisms  classes of indecomposable  finite groups.
The Krull-Schmidt theorem implies that $\RR$ is the polynomial ring $\Z [T_G ]$ in the variables $T_G$ corresponding to 
(the isomorphisms  classes of) the indecomposable finite groups. 
(If a finite group $G$ has the decomposition $G\cong \prod_{i=1}^r G{(i)}$ with indecomposable $G{(i)}$, one has 
$T^G=\prod_{i=1}^r T_{G{(i)}}$.) 

  \begin{definition}\label{def:universal-euler}
The universal Euler characteristic of  (tame)   $V$-manifold $Q$
is defined by
\begin{equation}\label{eq:universal-euler}
 \chiun(Q)=\sum_{\{G\}\in \GG} \chi(Q^{(G)}) T^G \in \RR.
\end{equation}
  \end{definition}

One can see that $\chiun$ is an additive and multiplicative invariant of $V$-manifolds.

Another interpretation of the ring $\RR$ is the following one:
it is the subring of the Grothendieck ring ${K}_0^{\rm fGr}(\Var)$ of quasi-projective varieties
with finite groups actions generated by the finite sets, i.~e., by zero-dimensional varieties.
In terms of the descrption/definition of the  Grothendieck ring ${K}_0^{\rm fGr}(\Var)$ in \cite{fGr},
an element $a=\sum_{\{G\}\in \GG} a_G T^G \in \RR$ can be  represented by the (virtual) set consisting of
$\sum_{G\in \GG} a_G$ points
so that on $a_G$ of them ($a_G$ may be negative) one has the trivial action of the group $G$.
In terms of the description given above, an element $a$ can be represented by a pair $(X, G^*)$,
where $G^*$ is a group containing all the groups $G$ with $a_G\ne 0$ and $X$ is the union over $\{G\}\in \GG$
of the (finite) sets which consists of $a_G$ copies of $G^*/G$ with the natural $G^*$-action. 

The universal Euler characteristic
$ \chiun(\bullet) $ can be defined for elements of the Grothendieck ring  
${K}_0^{\rm fGr}(\Var)$ 
of quasi-projective varieties  with  finite groups actions. 
Moreover it can be defined for equivariant cell complexes.
For an equivariant cell complex $X$ with an action of a finite group $G$,
$\chiun(X,G)$ can be defined by the equation
$$
\chiun(X,G)=\sum_{[H]\in \tiny{\conjsub} G} \chi(X^{([H])}/G) T^H.
$$
In orther terms it can be defined in the following way.
The set $\Sigma_k$ of cells of dimesion $k$ in $X$   is a finite set  
with a $G$-action. Then  
$$ \chiun(X,G)=\sum_{k} (-1)^k [(\Sigma_k, G)]\in \RR.$$

One can see that  the universal Euler characteristic of equivariant  cell complexes possesses
the following properties:
\begin{enumerate}
 \item[1)] additivity: if $(Y;G)$ is a closed $G$-invariant subcomplex of $(X,G)$, then
$$
\chiun(X,G)=\chiun(Y,G)+\chiun(X\setminus Y ,G)\,;
$$
 \item[2)] multiplicativity: if $(X', G')$ and $(X'', G'')$
are two equivariant cell complexes, then
$$
\chiun(X'\times X'' , G'\times G'')=\chiun(X', G')\cdot \chiun(X'', G'')\,;
$$
 \item[3)] the induction relation: if $G$ is a subgroup of $H$, then 
$$\chiun(\ind_G^H X,H) = \chiun(X,G).$$
\end{enumerate}

The relations 1) and 3) permit to define the universal Euler characteristic $\chiun$
as a function on the Grothendieck ring 
${K}_0^{\rm fGr}(\Var)$ (with values in $\RR$).
The relations 1) and 2) mean that it is a ring homomorphis to $\RR$.  

Let us give a statement which explains the word universal in the name of $\chiun$.

\begin{theorem}\label{thm:ring hom}
If $I$ is an additive invariant of (tame) $V$-manifolds with values in an Abelian group $R$, then there exists
a unique  homomorphism of Abelian groups $r: \RR \to R$ such that $I(\bullet)=r (\chiun(\bullet))$.
If $R$ is a ring and $I$ multiplicative, then $r$ is a ring homomorphism.
\end{theorem}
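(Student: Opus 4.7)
The plan is to define $r$ on the generators $T^G$ of $\RR$ and verify the required identity $I = r \circ \chiun$. For each finite group $G$, let $P_G$ denote the zero-dimensional $V$-manifold consisting of a single point with trivial $G$-action as its uniformizing system, so that $\chiun(P_G) = T^G$. Uniqueness is then immediate: any homomorphism $r$ satisfying $I = r \circ \chiun$ must obey $r(T^G) = r(\chiun(P_G)) = I(P_G)$, and since $\{T^G\}_{\{G\}\in\GG}$ is a $\Z$-basis of $\RR$, this pins $r$ down. I would therefore define $r$ by $r(T^G) := I(P_G)$ and extend $\Z$-linearly.

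To verify $I(Q) = r(\chiun(Q))$ on a tame $V$-manifold $Q$, I would first handle the ``pure'' case in which all points of $Q$ share the same isotropy group $G$. In that situation $Q$ is isomorphic to $M \times P_G$, where $M$ is the underlying smooth manifold (the reduction of $Q$) equipped with trivial $G$-action. Introduce an auxiliary invariant on ordinary manifolds by $I_G(M) := I(M \times P_G)$. Additivity of $I$ applied to the closed $V$-submanifolds $N \times P_G \subset M \times P_G$ (where $N \subset M$ is a closed submanifold) shows that $I_G$ is additive in the sense of Proposition~\ref{prop:unique_for_manifolds}, and so $I_G(M) = \chi(M) \cdot I_G(\mathrm{pt}) = \chi(M) \cdot I(P_G)$. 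Since $\chi(Q) = \chi(M)$, this gives $I(Q) = \chi(Q) \cdot I(P_G) = r(\chiun(Q))$ in the pure case. For a general $Q$, I would induct on the finite number of conjugacy classes of isotropy groups appearing in $Q$: choose a maximal class $[G_0]$, so that $Q^{(G_0)}$ is a closed $V$-submanifold; then apply additivity of both $I$ and $\chiun$ to the pair $(Q^{(G_0)}, Q \setminus Q^{(G_0)})$, combining the pure-case computation on $Q^{(G_0)}$ with the induction hypothesis on $Q \setminus Q^{(G_0)}$.

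The ring-homomorphism claim follows once one observes that $P_G \times P_H \cong P_{G \times H}$ as $V$-manifolds: if $I$ is multiplicative, then $r(T^G \cdot T^H) = r(T^{G \times H}) = I(P_{G \times H}) = I(P_G) \cdot I(P_H) = r(T^G) \cdot r(T^H)$, and the general statement follows by $\Z$-bilinearity. The main obstacle in this outline is the tameness bookkeeping in the inductive step: one has to confirm that the closed stratum $Q^{(G_0)}$ and its open complement $Q \setminus Q^{(G_0)}$ remain in the class of tame $V$-manifolds, so that additivity and Proposition~\ref{prop:unique_for_manifolds} legitimately apply. This should follow from the hypothesis that $Q$ is the interior of a compact $V$-manifold with boundary (so that the isotropy stratification is compatible with the compactification) together with the maximality of $[G_0]$ (which is precisely what makes the stratum $Q^{(G_0)}$ closed rather than merely locally closed).
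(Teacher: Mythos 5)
Your proof is correct and follows essentially the same route as the paper: decompose $Q$ into the isotropy strata $Q^{(G)}$, reduce each stratum to Proposition~\ref{prop:unique_for_manifolds} via the reduction manifold, and read off $r(T^G)=I(P_G)$. The only (cosmetic) differences are that you peel off the closed stratum of a \emph{maximal} isotropy class while the paper removes the open stratum of a \emph{minimal} one, and that you make explicit the auxiliary invariant $I_G$ and the tameness bookkeeping that the paper leaves implicit.
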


\begin{proof}
Let $Q$ be a tame $V$-manifold. For a finite group (or rather for an isomorphism class of 
finite groups) $G$, let $Q^{(G)}$  be the set of points $x\in X_Q$ with isotropy group
isomorphic to $G$.
The fact that $Q$ is assumed to be tame implies that there are  finitely many classes 
$G$ such that $Q^{(G)}\ne \emptyset.$
($Q^{(G)}$ is a non-closed $V$-submanifold of $Q$ whose reduction is a usual 
$C^{\infty}$-manifold.)

The set $\GG$ of isomorphism classes of finite groups
is a partially order  set. Let $G$ be a minimal element
from $\GG$ with  $Q^{(G)}\ne \emptyset.$
By additivity one has $I(Q)=I(Q^{(G)})+ I(Q\setminus Q^{(G)}).$
Itereting this equation one gets $I(Q)=\sum_{G\in  \GG} I(Q^{(G)}).$
Since $Q^{(G)}$  is the usual $C^{\infty}$-manifold, due to Proposition~\ref{prop:unique_for_manifolds}
one has $I(Q^{(G)})= \chi(Q^{(G)})\tau_G$,
with $\tau_G=I(T^G)\in R$. 
One can see that the group homomorphis $r: \RR \to R$ which sends 
the universal Euler characteristic $\chiun(\bullet)$ to  $I(\bullet)$ is defined by $r(T^G)=\tau_G$.

The multiplicativity of $r$, for $I$ being multiplicative, is obious.
\end{proof}

One also has the following universality properties of $\chiun(\bullet)$ for equivariant cell complexes.

\begin{theorem}\label{thm:ring hom2}
Let $I$ be an additive invariant of equivariant cell complexes with values in an Abelian group $R$
posseing the induction property: $I(\ind_G^H X,H) = I(X,G)$ for finite groups $G\subset H $.
Then there exists a unique 
homomorphism of Abelian groups $r: \RR \to R$ such that $I(\cdot,\cdot))=r (\chiun(\cdot,\cdot))$.
If $R$ is a ring and $I$  multiplicative, then $r$ is a ring homomorphism.   
\end{theorem}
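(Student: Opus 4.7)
The plan is to mirror the strategy used in Theorem~\ref{thm:ring hom}, but to replace the appeal to Proposition~\ref{prop:unique_for_manifolds} by a direct cell-by-cell argument, and to invoke the induction property in order to reduce arbitrary stabilizers to the trivial action on a point. First I would set $\tau_H := I(\mathrm{pt}, H)$ for each isomorphism class of a finite group $H$ (where $(\mathrm{pt}, H)$ is the one-point equivariant cell complex with trivial action) and define $r : \RR \to R$ as the $\Z$-linear extension of $r(T^H) := \tau_H$. Uniqueness of $r$ is then forced by the identity $\chiun(\mathrm{pt}, H) = T^H$.

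The central technical step is the following lemma: for an open $k$-ball $\sigma^k$ with trivial action of a finite group $H$, one has $I(\sigma^k, H) = (-1)^k \tau_H$. I would prove this by induction on $k$, adapting the cut-in-half argument from Proposition~\ref{prop:unique_for_manifolds}. The base case $k=0$ is the definition. For the inductive step, endow $\sigma^k \cong \R \times \R^{k-1}$ with the CW-structure whose open cells are $\{0\} \times \R^{k-1}$ together with the two half-spaces $\R_{\pm} \times \R^{k-1}$; since $H$ acts trivially, this decomposition is automatically $H$-equivariant, and additivity gives
\begin{equation*}
I(\sigma^k, H) \;=\; 2\, I(\sigma^k, H) \;+\; I(\sigma^{k-1}, H),
\end{equation*}
so that by the induction hypothesis $I(\sigma^k, H) = -I(\sigma^{k-1}, H) = (-1)^k \tau_H$.

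With the lemma at hand, I would apply additivity to the skeletal filtration $X_0 \subset \ldots \subset X_n = X$: each stratum $X_k \setminus X_{k-1}$ is the disjoint union of $G$-orbits of open $k$-cells, and the orbit of such a cell $\sigma$ with stabilizer $H_\sigma$ is $G$-equivariantly isomorphic to $\ind_{H_\sigma}^G \sigma$, with $H_\sigma$ acting trivially on $\sigma$ by the equivariant CW-condition. Combining the induction property with the lemma yields $I(G\sigma, G) = I(\sigma, H_\sigma) = (-1)^k \tau_{H_\sigma}$. Summing over all orbits and all dimensions and comparing with $\chiun(X,G) = \sum_k (-1)^k [(\Sigma_k, G)]$ then gives $I(X,G) = r(\chiun(X,G))$. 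Multiplicativity of $r$, when $I$ is multiplicative, follows from $T^{G_1} T^{G_2} = T^{G_1 \times G_2}$ together with the identification of $(\mathrm{pt}, G_1 \times G_2)$ with $(\mathrm{pt}, G_1) \times (\mathrm{pt}, G_2)$.

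The main obstacle I expect is justifying the lemma within the ambient framework of equivariant cell complexes: one must ensure that the cut-in-half subdivision yields a valid additive decomposition (a closed equivariant subcomplex together with its open complement inside an ambient equivariant CW-complex) and that $I$ is insensitive to the choice of refinement of the CW-structure. The trivial action of $H$ makes equivariance automatic; subdivision invariance should be implicit in the definition of an \emph{invariant of equivariant cell complexes}, but if not it can be secured by a separate induction on the number of added cells. This is the delicate point that requires the most care.
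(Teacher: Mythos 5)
Your proposal follows essentially the same route as the paper's own proof: additivity reduces $I(X,G)$ to a sum over $G$-orbits of cells, the cut-in-half argument of Proposition~\ref{prop:unique_for_manifolds} reduces a $k$-cell to a point up to the sign $(-1)^k$, and the induction relation replaces the orbit of a cell with stabilizer $G_{\sigma}$ by a single point with the (trivial) $G_{\sigma}$-action, giving $r(T^G)=I(\{pt\},G)$. The only, immaterial, difference is that you apply the induction relation before the dimension-reduction lemma, whereas the paper first writes $I(G\sigma^k,G)=(-1)^kI((G/G_{\sigma^k})\times\{pt\},G)$ and only then invokes induction.
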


\begin{proof}
Let $\Sigma$ be the (finite set) of cells in an equivariant cell complexes $(X,G)$ ($\Sigma$ is a $G$-set).
The  additivity property of $I$ imply that: 
$I(X,G)=\sum_{[\sigma] \in \Sigma/G} I(G\sigma ,G)$ where $\sigma= \sigma^k$ is an open cell (of certain
dimension $k$) in $X$: a representative of the orbit $[\sigma]$,
$G\sigma$ is the union $\cup_{g \in G} g \sigma$  of the $G$-shifts of $\sigma$.

Let $G_{\sigma^k}$ be the isotropy group of the cell $\sigma^k$.(Let us recall that $G_\sigma$ acts trivialy on
$\sigma$.) One has $G \sigma^k= (G/G_{\sigma^k})\times \sigma^k$, where $G/G_{\sigma^k}$ is a finite $G$-set.
Just as in the proof of Proposition~\ref{prop:unique_for_manifolds} one has
$$
I(G \sigma^k, G)=(-1)^k I((G/G_{\sigma^k})\times \{pt\},G).
$$
The induction property implies that
$I((G/G_{\sigma^k})\times \{pt\},G)= I(G_{\sigma^k}/G_{\sigma^k})\times \{pt\},G_{\sigma^k}).$
Let us denote $I(G/G\times \{pt\},G)$ by $\tau_G$. One can see that group homomorphism $r:\RR\to R$
which sends $\chiun(\cdot,\cdot)$ to $I(\cdot,\cdot)$ is defined by $r(T^G)=\tau_G$.
The multiplicativity of $r$ in the case when $I$ is multiplicative is obvious. 
\end{proof}

%%%%%%%%%%%%%%%%%%%%%%%%%%%%%%%%%%%%%%%%%%%%%%%%%%%%%%%%%%%%%%%%%%%%%%%%% Section 6
\section{
%%Symmetric product and configuration space
$\lambda$-ring structures on $\RR$ and the correspondig  power structures}\label{sec:lambda-power} 
%%%%%%%%%%%%%%%%%%%%%%%%%%%%%%%%%%%%%%%%%%%%%%%%%%%%%%%%%%%%%%%%%%%%%%%%%

A way to formulate  an analogue of Macdonald type equations for an (additive and multiplicative)
invariant with values in a ring $R$ is through the so called \emph{power structure}
over the ring $R$: \cite{GLM-MRL}.
A power structure over a ring $R$ is a method to give sense to an expression of the form
$(1+a_1 t+ a_2 t^2+\ldots)^m$  with $a_i, m\in R$ as a power series form 
$1+t R[[t]]$ so that all properties of the usual exponential function hold.
The notion of a power structure over a ring $R$ is related with the notion 
of a $\lambda$-ring (some times called a pre-$\lambda$-ring) 
structure on $R$: \cite{Knutson}.

We will describe two 
$\lambda$-ring structures on the ring $\RR$ appropriate 
for the formulation of Macdonald type equations for the symmetric products and for the configuration spaces.
As it was explained, $\RR$ is the ring of polynomials in the variables $T_G$ corresponding
to the isomorphism classes $G$ of indecomposable finite groups.
The standard $\lambda$-ring structure on the polynomial ring $\Z [x_1, x_2, x_3, \ldots ]$
(see, e.~g., \cite{Knutson}) is defined in the following way: for 
$$p(\overline{x})=\sum_{\overline{k}} p_{\overline{k}} 
{\overline{x}}^{\overline{k}} 
\in \Z [x_1, x_2, x_3, \ldots ],$$
one has 
\begin{equation}\label{eq:lambda-polynomial}
\lambda_{p(\overline{x})}(t)
=\prod_{\overline{k}} \left(1- \overline{x}^{\overline{k}} t  
 \right)^{-p_{\overline{k}}}.
\end{equation}
Equation (\ref{eq:lambda-polynomial}) follows directly   
 from the equation for the $\lambda$-series corresponding to a monomial:
$$
\lambda_{\overline{x}^{\overline{k}}}(t)=
 \left(1- \overline{x}^{\overline{k}} t   \right)^{-1}.
$$

Natural $\lambda$-rings structures on $\RR$ are different ones.
To define a $\lambda$-ring structure on $\RR$, one can define the $\lambda$-series, say, $\nu_{T^G}(t)$
for a monomial $T^G$ from $\RR$. Namely, if the series $\nu_{T^G}(t)$ is defined for all $\{G\}\in\G$
(so that $\nu_{T^G}(t)=1+T^Gt+\ldots$), then one defines the $\lambda$-series for an element
$A=\sum_{\{G\}\in \GG} a_G T^G\in \RR$ ($a_G\in \Z$) by
$$
\nu_{A}(t)=\prod_{\{G\}\in \GG} \left(\nu_{T^G}(t) \right)^{a_G}.
$$

Let us first describe the $\lambda$-rings structure on $\RR$
corresponding to the symmetric products of spaces. We will call it the {\em power product $\lambda$-structure}.
This structure will be defined defined by a $\lambda$-series $\zeta_{\bullet}(t)$.
For a finite group $G$, let $G_n=G\wr S_n=G^n\rtimes S_n$ be the corresponding wrearth product. 
Let us  define $\zeta_{T^G}(t)$ for the monomial $T^G$ by the equation 
\begin{equation}\label{eq:lambba-inde1}
 \zeta_{T^G}(t) =1+ \sum_{n=1}^{\infty} T^{G_n} t^n.
\end{equation}
In particular,
$$
 \zeta_{1}(t) =1+ \sum_{n=1}^{\infty} T^{S_n} t^n.
$$

\begin{remark}
 Pay attention that the coefficients of the $\lambda$-series
for a monomial are monomials as well.
\end{remark}

Let us (partially) describe the series (\ref{eq:lambba-inde1}) in terms of the variables $T_G$
for the polynomial ring $\RR$ ($G$ runs through isomorphism classes of indecomposable finite groups).
For such a description  the variables coresponding to the Abelian groups play a special role.
Let $A_{p,k}\cong \Z_{p^k} ( p$ is prime, $k\geq 1$) be the indecomposable  finite Abelian groups.
For a group  $G=\prod_{p,k} (A_{p,k})^{l_{p,k}} \prod G(i)^ {k_i}$ with non-Abelian indecomposable 
finite groups $G(i)$,  for  $n>1$, one has
$$
(\prod_{p,k} A_{p,k}^{l_{p,k}} \prod G(i)^ {k_i})_n \cong \left(\prod_{p,k: p \centernot |  n} 
 A_{p,k}^{l_{p,k} } \right)\times  \widehat{G} (n),
$$
where $\widehat{G}(n)$ is a indecomposable non-Abelian group (depending on  the group $G$ of course)
(see \cite{Neu} and also \cite{DF} for more precise statements)
and therefore
\begin{equation}\label{eq:lambba-inde}
 \zeta_{\prod_{p,k} T_{A_{p,k}}^{l_{p,k}} \prod_{i} T_{G(i)}^{k_i}}(t) 
=1+ \left(\prod_{p,k} T_{A_{p,k}}^{l_{p,k}} \prod_{i} T_{G(i)}^{k_i}\right) t
+ \sum_{n=2}^{\infty} \left( T_{\widehat{G}(n)}\prod_{p,k: p \centernot | n} T_ {A_{p,k}}^{l_{p,k}}\right) t^n\,.
\end{equation}

The described $\lambda$-ring structure on $\RR$ defines (in the usual way: see \cite{GLM-MRL}, \cite{GLM-Stek})
a power structure over $\RR$. We will call it the {\em symmetric product power structure}.
Let us recall that according to the construction of the power structure one has
$$
\left(\zeta_1(t)\right)^{T^G}=\zeta_{T^G}(t)\,.
$$

The other  $\lambda$-ring structure on $\RR$ corresponds to the configuration space of spaces.
We will call it the {\em configuration space $\lambda$-ring structure}.
This structure will be  defined by a $\lambda$ -series $\lambda_{\bullet}(t)$. 
As above it is suffient to define this series for monomials. Let
\begin{equation}\label{eq:lambba-lambda}
 \lambda_{T^G}(t) =1+ T^G t\,.
\end{equation}
In particular, $\lambda_1(t)=1+t$.

This $\lambda$-ring structure on $\RR$ defines the corresponding {\em configuration space power structure} over $\RR$.
The described power structures (the symmetric product and the configuration space ones) over $\RR$ are different: 
see computations in \cite[page~17]{fGr}.

From \cite{fGr} and the interpretation of $\RR$ given  above it follows that the configuration space
power structure over $\RR$ is effective in the following sense. Let $\RR_{+}$ be the subsemiring of $\RR$ 
consisting of polynomials in $T_G$ with non-negatives coefficients. 
The effectiveness of the power structure means that if $a_i$ and $m$ are from
$\RR_{+}$, then all the coefficientes  of the series $(1+a_1 t+ a_2 t^2+\ldots)^m$ belong to
$\RR_{+}$ as well.

\begin{remark}
The fact that this power structure is effective is not a direct consequence of  the equation~(\ref{eq:lambba-lambda})
for the $\lambda$-series. The effectiveness of the configuration space  power structure is a consequence
of an explicit equation for it: see~\cite[Equation~10]{fGr}.
The symmetric product power structure over $\RR$ is not effective: see again  
\cite[page~17]{fGr}.
\end{remark}

%%%%%%%%%%%%%%%%%%%%%%%%%%%%%%%%%%%%%%%%%%%%%%%%%%%%%%%%%%%%%%%%%%%%%%%%% Section 7
\section{Macdonald type equations for the universal Euler characteristics for symmetric products}
\label{sec:Macdonald} 
%%%%%%%%%%%%%%%%%%%%%%%%%%%%%%%%%%%%%%%%%%%%%%%%%%%%%%%%%%%%%%%%%%%%%%%%%

Let $Q$ be a (tame) $V$-manifold. The $n$-th symmetric power $S^n Q$ of $Q$ is the $V$-manifold defined in the following way.
The underline space of $S^n Q$ is the $n$-th symmetric power
$S^n X_Q$ of the underline space $X_Q$ of the V-manifold $Q$.
Let
$$
\underline{x}=(x_1,\ldots, x_1,\ldots, x_s, \ldots, x_s)
$$
where $x_i$, $i=1,\ldots, s$, is a point of $X_Q$ with the multiplicity $k_i$ in $\underline{x}$,
$\sum_{i=1}^s k_i=n$,  $x_i\neq x_j$ for $i\ne j$, be a point  of $S^n X_Q$
and let $(U_i, \tilde{U}_i, G(i), \varphi_i)$ be local uniformizing systems for neighbourhoods $U_i$ of
the points $x_i$ such that
$U_i\cap U_j =\emptyset$ for $i\ne j$. Then the orbifold structure on $S^n Q$ in a neighbourhood of $\underline{x}$ is defined by the
local uniformizing system
$(S^{k_1} U_1\times\ldots\times  S^{k_s} U_s, \tilde{U}_1^{k_1}\times \ldots\times  \tilde{U}_s^{k_s}, 
(G(1))_{k_1}\times \ldots \times (G(s))_{k_s}, \bar{\varphi} )$, where
$(G(i))_{k_i}$ is the wreath product $G(i)\wr S_{k_i}$ acting on the Cartesian power $U_i^{k_i}$ in the usual way,
$\bar\varphi=(\varphi_1\times\ldots\times\varphi_1\times\ldots\times\varphi_s\times\ldots\times\varphi_s)$.

\begin{theorem}\label{theo:Mac_V-manifolds} For a $V$-manifold $Q$ one has
\begin{equation}\label{eq:Mac_V-manifolds}
 1+\sum_{n=1}^{\infty}\chiun(S^nQ)t^n=\zeta_{\chiun(Q)}(t)=(\zeta_1(t))^{\chiun(Q)},
\end{equation}
where the right hand side is written in terms of the symmetric product power structure over $\RR$. 
\end{theorem}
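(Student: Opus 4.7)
The plan is to apply the universality Theorem~\ref{thm:ring hom}. Define
$$
F(Q)\ :=\ 1+\sum_{n=1}^{\infty}\chiun(S^nQ)\,t^n,
$$
regarded as taking values in the multiplicative Abelian group $(1+t\RR[[t]],\cdot)$. The first step is to show that $F$ is an additive invariant of tame $V$-manifolds in this group, i.e.\ that for a closed $V$-submanifold $Q'\subset Q$,
$$
F(Q)\ =\ F(Q')\cdot F(Q\setminus Q').
$$
This rests on the set-theoretic decomposition
$$
S^nQ\ =\ \bigsqcup_{k+l=n} S^kQ'\times S^l(Q\setminus Q'),
$$
which, in view of the orbifold uniformizing system around $(x_1^{k_1},\ldots,x_s^{k_s})$ displayed immediately before the theorem, respects the $V$-manifold structure: the wreath-product isotropy $\prod_i G(i)_{k_i}$ splits as the product of the analogous wreath-product isotropies of the two partial multi-points in $S^kQ'$ and in $S^l(Q\setminus Q')$, according to which $x_i$ lie in $Q'$ and which lie in $Q\setminus Q'$. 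The additivity and multiplicativity of $\chiun$ then convert this into the Cauchy product of generating series.

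Granted this, Theorem~\ref{thm:ring hom} furnishes a unique group homomorphism $\rho:\RR\to(1+t\RR[[t]],\cdot)$ such that $F(Q)=\rho(\chiun(Q))$ for every tame $V$-manifold $Q$. Since $\RR$ is freely generated as a $\Z$-module by the monomials $T^G$, $\rho$ is determined by its values on these generators. Evaluating $F$ on the one-point $V$-manifold with isotropy $G$ — whose universal Euler characteristic is $T^G$ and whose $n$-th symmetric power is the one-point $V$-manifold with isotropy $G_n=G\wr S_n$ — gives
$$
\rho(T^G)\ =\ 1+\sum_{n=1}^{\infty}T^{G_n}\,t^n\ =\ \zeta_{T^G}(t).
$$

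Now the assignment $a\mapsto \zeta_a(t)$ is, by the very construction of the power-product $\lambda$-structure on $\RR$ in Section~\ref{sec:lambda-power}, a group homomorphism $\RR\to(1+t\RR[[t]],\cdot)$ taking $T^G$ to $\zeta_{T^G}(t)$. Since $\rho$ and $\zeta_{\bullet}(t)$ are two group homomorphisms agreeing on the $\Z$-basis $\{T^G\}$, they coincide. Specialising at $a=\chiun(Q)$ gives the first equality of~(\ref{eq:Mac_V-manifolds}); the second, $(\zeta_1(t))^{\chiun(Q)}=\zeta_{\chiun(Q)}(t)$, is built into the definition of the symmetric-product power structure via $(\zeta_1(t))^{T^G}=\zeta_{T^G}(t)$ extended $\Z$-linearly.

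The principal obstacle is the orbifold bookkeeping in the first step. One must verify rigorously that the decomposition of $S^nQ$ according to how an unordered multi-point splits between $Q'$ and $Q\setminus Q'$ is an honest decomposition of $V$-manifolds, with the local uniformizing systems on $S^nQ$ assembling precisely as products of those on $S^kQ'$ and $S^l(Q\setminus Q')$. Once this structural fact is secured, the remainder of the argument is formal: the universality theorem plus the $\Z$-linear extension defining the $\lambda$-series $\zeta_a(t)$.
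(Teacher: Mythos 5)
Your proof is correct, and it rests on the same two pillars as the paper's own argument: the multiplicativity $F(Q)=F(Q')\cdot F(Q\setminus Q')$ coming from the decomposition $S^nQ=\bigsqcup_{k+l=n}S^kQ'\times S^l(Q\setminus Q')$, and the terminal computation at a one-point orbifold, where the $n$-th symmetric power of a point with isotropy $G$ is a point with isotropy $G_n=G\wr S_n$, giving $\zeta_{T^G}(t)$. The difference is in the middle of the argument: the paper redoes the reduction by hand (stratify $Q$ by the $Q^{(G)}$, cut the underlying manifolds into cells, and use the relation $\xi_{\sigma^kT^G}(t)=\bigl(\xi_{\sigma^0T^G}(t)\bigr)^{(-1)^k}$ obtained by splitting a $k$-cell into two $k$-cells and a $(k-1)$-cell), whereas you outsource exactly this reduction to Theorem~\ref{thm:ring hom}, applied to $F$ viewed as an invariant with values in the Abelian group $(1+t\RR[[t]],\cdot)$. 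This is a legitimate and arguably cleaner route: Theorem~\ref{thm:ring hom} (via Proposition~\ref{prop:unique_for_manifolds}) already contains the cell-cutting argument, so nothing is lost, and your version makes transparent that the Macdonald equation is a formal consequence of universality together with the single evaluation $\rho(T^G)=\zeta_{T^G}(t)$ and the observation that $a\mapsto\zeta_a(t)$ is, by construction, a homomorphism agreeing with $\rho$ on the $\Z$-basis $\{T^G\}$. You also correctly identify the one point that both arguments pass over quickly, namely the orbifold bookkeeping in the decomposition of $S^nQ$: the paper asserts it in one sentence, and your description of how the wreath-product isotropy $\prod_i G(i)_{k_i}$ splits according to which $x_i$ lie in $Q'$ is the right justification.
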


Let us recall that, if $\chiun(Q)=\sum_{\{G\}\in \GG} a_G T^G$, then
$$
\zeta_{\chiun(Q)}(t)=\prod_{\{G\}\in \GG} \left(1+ T^Gt + T^{G_2} t^2+ T^{G_3} t^3+\ldots\right)^{a_G}.
$$

\begin{proof}
 Let us denote the left hand side of Equation (\ref{eq:Mac_V-manifolds}) by $\xi_Q(t)$.
 If $Q'$ is a closed $V$-submanifold of $Q$, one has $\xi_Q(t)=\xi_{Q'}(t)\xi_{Q\setminus Q'}(t)$.
 (This follows from the fact that $S^nQ$ is the disjoint union of $S^kQ'\times S^{n-k}(Q\setminus Q')$
 for $0\le k\le n$.) The representation of $Q$ as the disjoint union of the sub-$V$-manifolds $Q^{(G)}$
 for $G\in\G$ permits to prove the statement for $Q=MT^G$ for a $C^{\infty}$-manifold $M$ (with the action
 of the trivial group). A representation of $M$ as a cell complex permits to prove the statement for
 $Q=\sigma^kT^G$, where $\sigma^k$ is an open cell of dimension $k$.
 The fact that a $k$-dimensional cell can be represented as the union of two $k$-dimensional cells and
 one $(k-1)$-dimensional cell implies that
 $$
 \xi_{\sigma^kT^G}(t)=\left(\xi_{\sigma^0T^G}(t)\right)^{(-1)^k}.
 $$
 Therefore it is sufficient to show~(\ref{eq:Mac_V-manifolds}) for $Q=\sigma^0T^G$. In this case we have
 $$
 \xi_{\sigma^0T^G}(t)=1+T^Gt+T^{G_2}t^2+T^{G_3}t^3+\ldots=\zeta_{T^G}(t)=\left(\zeta_1(t)\right)^{T^G}=
 \left(\zeta_1(t)\right)^{\chiun(\sigma^0T^G)}.
 $$ 
\end{proof}

One has a Macdonald type equation for equivariant cell complexes (and therefore for representatives $(X,G)$
of elements of the Grothendieck ring $K_0^{\rm fGr}(\Var)$). For a cell complex $(X,G)$, let $(X^n,G_n)$
be the Cartesian power of the complex $X$ with the standard action of the wreath product $G_n$.

\begin{theorem}\label{theo:Mac_cells} For an equivariant cell complex $(X,G)$, one has
$$
 1+\sum_{n=1}^{\infty}\chiun(X^n, G_n)t^n=\zeta_{\chiun(X,G)}(t)=(\zeta_1(t))^{\chiun(X,G)}.
$$
(the right hand side is in terms of the symmetric product power structure).
\end{theorem}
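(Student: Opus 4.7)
My plan is to adapt the proof of Theorem~\ref{theo:Mac_V-manifolds}, with the wreath product $G_n=G\wr S_n$ playing the role that symmetric powers did there. I will write $\xi_{(X,G)}(t):=1+\sum_{n\geq 1}\chiun(X^n,G_n)t^n$ for the left-hand side of the claimed identity. The first step is to show that $\xi$ is multiplicative under splitting off a closed $G$-invariant subcomplex $(Y,G)$ of $(X,G)$. This should follow from the $G_n$-equivariant decomposition
$$X^n=\bigsqcup_{k=0}^{n}\ind_{G_k\times G_{n-k}}^{G_n}\bigl(Y^k\times (X\setminus Y)^{n-k}\bigr),$$
with $G_k\times G_{n-k}\cong G\wr(S_k\times S_{n-k})$ naturally embedded in $G_n$, combined with the additivity, multiplicativity, and induction relations listed at the end of Section~\ref{sec:Universal_Euler}. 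These will give $\chiun(X^n,G_n)=\sum_{k=0}^{n}\chiun(Y^k,G_k)\cdot\chiun((X\setminus Y)^{n-k},G_{n-k})$, i.e.\ $\xi_{(X,G)}(t)=\xi_{(Y,G)}(t)\cdot\xi_{(X\setminus Y,G)}(t)$.

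Iterating this multiplicativity over the $G$-orbits of cells, I reduce to verifying the identity on a single orbit $G\sigma^k=G/H\times\sigma^k$, where $H$ is the isotropy of $\sigma^k$ and acts trivially on it (by the defining property of an equivariant $CW$-complex). The stabiliser of $(H,\ldots,H)\in(G/H)^n$ under the wreath-product action is exactly $H_n=H\wr S_n$, so $(G/H)^n\cong G_n/H_n$ as $G_n$-sets. A direct check then yields a natural $G_n$-equivariant isomorphism
$$(G/H\times\sigma^k)^n\cong\ind_{H_n}^{G_n}(\sigma^k)^n.$$
The induction relation for $\chiun$ gives $\chiun((G\sigma^k)^n,G_n)=\chiun((\sigma^k)^n,H_n)$ for every $n$, hence $\xi_{(G\sigma^k,G)}(t)=\xi_{(\sigma^k,H)}(t)$ and $\chiun(G\sigma^k,G)=\chiun(\sigma^k,H)$. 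So it suffices to prove the identity when $(X,G)=(\sigma^k,H)$ with $H$ acting trivially on a single open cell.

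The base case $k=0$ follows immediately from the definition~(\ref{eq:lambba-inde1}): $(X^n,H_n)=(\mathrm{pt},H_n)$ gives $\chiun(X^n,H_n)=T^{H_n}$, whence $\xi_{(\mathrm{pt},H)}(t)=\zeta_{T^H}(t)=(\zeta_1(t))^{T^H}=(\zeta_1(t))^{\chiun(\mathrm{pt},H)}$. For $k\geq 1$, I will cut $\sigma^k$ by a codimension-one disc into $\sigma_1^k\sqcup\sigma^{k-1}\sqcup\sigma_2^k$ with trivial $H$-action throughout, exactly as in the $V$-manifold argument. The first step then forces $\xi_{(\sigma^k,H)}(t)=\xi_{(\sigma^k,H)}(t)^2\cdot\xi_{(\sigma^{k-1},H)}(t)$, so $\xi_{(\sigma^k,H)}(t)=\xi_{(\sigma^{k-1},H)}(t)^{-1}$, and induction on $k$ produces $\xi_{(\sigma^k,H)}(t)=\zeta_{T^H}(t)^{(-1)^k}=(\zeta_1(t))^{\chiun(\sigma^k,H)}$. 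The main obstacle I anticipate is the careful verification of the two $G_n$-equivariant identifications above: induction from a subgroup should commute with $X\mapsto X^n$ equipped with its wreath-product action, but this needs care because $H_n$ does not act trivially on $(\sigma^k)^n$ even when $H$ acts trivially on $\sigma^k$. Once these identifications are in place, the rest is a formal adaptation of the $V$-manifold proof.
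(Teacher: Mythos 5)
Your proposal is correct and follows essentially the same route as the paper, which proves this theorem by a one-line reduction to the argument for Theorem~\ref{theo:Mac_V-manifolds}, replacing $\sigma^kT^G$ by the orbit $\sigma^k\times(G/G_{\sigma^k})$. Your writeup simply makes explicit the steps the paper leaves implicit (the wreath-product decomposition of $X^n$ over a closed invariant subcomplex, the identification $(G/H\times\sigma^k)^n\cong\ind_{H_n}^{G_n}(\sigma^k)^n$, and the cell-cutting induction), all of which check out.
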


The {\bf proof} is essentially the same as the one of Theorem~\ref{theo:Mac_V-manifolds} with the only difference
that the general case is reduced not to $\sigma^kT^G$, but to $\sigma^k\times (G/G_{\sigma^k})$.

%%%%%%%%%%%%%%%%%%%%%%%%%%%%%%%%%%%%%%%%%%%%%%%%%%%%%%%%%%%%%%%%%%%%%%%%% Section 8
\section{Macdonald type equations for the universal Euler characteristics for configuration spaces}
\label{sec:Macd-confg} 
%%%%%%%%%%%%%%%%%%%%%%%%%%%%%%%%%%%%%%%%%%%%%%%%%%%%%%%%%%%%%%%%%%%%%%%%%

For a $V$-manifold $Q$, its $n$-th configuration space $M_n Q$ is the $V$-manifold defined in the following way.
Its  underline space is $M_n X_Q=(X_Q^n \setminus \Delta )/S_n\subset S^n X_Q$; the $V$-manifold structure 
on it comes from the one in $S^ n X_Q$. (Pay attention that one has to define 
local uniformizing systems only for points 
$
\underline{x}=(x_1,\ldots, x_n)
$
with $x_i\ne x_j$ if $i\ne j$.)

\begin{theorem}\label{theo:Mac_V-manifolds-conf} For a $V$-manifold $Q$, one has
\begin{equation}\label{eq:Mac_V-manifolds-conf}
 1+\sum_{n=1}^{\infty}\chiun(M_n Q)t^n=\lambda_{\chiun(Q)}(t)=(1+t)^{\chiun(Q)}.
\end{equation}
where the right hand side is written in terms of the configuration space  power structure over $\RR$. 
\end{theorem}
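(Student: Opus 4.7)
The plan is to follow the template of the proof of Theorem~\ref{theo:Mac_V-manifolds}, denoting the left-hand side of~(\ref{eq:Mac_V-manifolds-conf}) by $\eta_Q(t)$ and reducing the general case to the simplest building block: a single point carrying isotropy group $G$. The key structural input is the multiplicativity of $\eta_Q(t)$ under disjoint decomposition: if $Q'$ is a closed $V$-submanifold of $Q$, then every configuration of $n$ distinct points of $Q$ splits cleanly according to how many lie in $Q'$ versus in $Q\setminus Q'$, yielding $M_n Q=\bigsqcup_{k=0}^{n} M_k Q'\times M_{n-k}(Q\setminus Q')$ as $V$-manifolds, with the orbifold structures factoring naturally since the $S_n$-action is free on configurations of distinct points. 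Taking $\chiun$ and applying its multiplicativity then gives $\eta_Q(t)=\eta_{Q'}(t)\cdot \eta_{Q\setminus Q'}(t)$.

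Using this multiplicativity first along the isotropy stratification $Q=\bigsqcup_{\{G\}\in\G}Q^{(G)}$ and then along a cell decomposition of the $C^{\infty}$-manifold underlying $Q^{(G)}$, I will reduce the claim to the case $Q=\sigma^k T^G$, where $\sigma^k$ is an open cell of dimension $k$ with constant isotropy $G$. The same device as in the proof of Proposition~\ref{prop:unique_for_manifolds} --- a $k$-cell decomposes as the union of two $k$-cells and one separating $(k-1)$-cell --- combined with multiplicativity yields the recursion $\eta_{\sigma^k T^G}(t)=\eta_{\sigma^0 T^G}(t)^{(-1)^k}$. The base case is immediate: when $Q=\sigma^0 T^G$ the underlying space is a single point, so $M_1 Q=Q$ and $M_n Q=\emptyset$ for $n\geq 2$, giving $\eta_{\sigma^0 T^G}(t)=1+T^G t$, which is exactly $\lambda_{T^G}(t)=(1+t)^{T^G}$ by the defining equation~(\ref{eq:lambba-lambda}). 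Together with $\chiun(\sigma^k T^G)=(-1)^k T^G$, this establishes~(\ref{eq:Mac_V-manifolds-conf}) on cells, and multiplicativity of both sides of~(\ref{eq:Mac_V-manifolds-conf}) recovers the general case.

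I expect the main obstacle to be the careful justification of the multiplicativity step in the $V$-manifold setting: one must confirm that at a configuration $\{x_1,\ldots,x_n\}$ of distinct points the orbifold structure of $M_n Q$ is simply the product of the local orbifold structures at the individual $x_i$, with no wreath-product factors arising from $S_n$. This is precisely the feature distinguishing the configuration space case from the symmetric product setting of Theorem~\ref{theo:Mac_V-manifolds}, and it is what makes the base $\lambda$-series $\lambda_{T^G}(t)=1+T^G t$ so simple --- only the linear term survives, rather than the full wreath-product series $\zeta_{T^G}(t)$.
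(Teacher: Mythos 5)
Your proposal is correct and is essentially the paper's own argument: the paper proves this theorem by declaring it a ``minor modification'' of the proof of Theorem~\ref{theo:Mac_V-manifolds}, and the modification you spell out (multiplicativity of $\eta_Q(t)$ via $M_nQ=\bigsqcup_k M_kQ'\times M_{n-k}(Q\setminus Q')$, reduction through the isotropy strata and a cell decomposition to $\sigma^0T^G$, where $M_nQ=\emptyset$ for $n\ge 2$ gives $1+T^Gt=\lambda_{T^G}(t)$) is exactly the intended one. Your closing observation that the product (rather than wreath-product) local structure on configurations of distinct points is what replaces $\zeta_{T^G}$ by $\lambda_{T^G}$ correctly identifies the only point where the two proofs differ.
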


Let us recall that $\lambda_{1}(t)=1+t$, if $\chiun(Q)=\sum_{\{G\}\in \GG} a_G T^G$, then
$$
\lambda_{\chiun(Q)}(t)=\prod_{\{G\}\in \GG} \left(1+ T^Gt\right)^{a_G}.
$$

An analogue of this equation for equivariant cell complex is given in the following statement.
For equivariant cell complex  $(X,G)$ and for $n\geq 1$, let $\Delta_G \subset X^n$ be the big $G$-diagonal 
in $X^n$ consisting of (ordered) n-tuples  $(x_1,\ldots, x_n)\in X^n$ with at least two of the components $x_i$
from the same $G$-orbit.

\begin{theorem}\label{theo:Mac_cells-conf} For an equivariant cell complex $(X,G)$, one has
$$
 1+\sum_{n=1}^{\infty}\chiun(X^n\setminus \Delta_G, G_n)t^n=\lambda_{\chiun(X,G)}(t)=(1+t)^{\chiun(X,G)}.
$$
(the right hand side is in the terms of the configuration space power structure).
\end{theorem}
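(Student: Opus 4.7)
The plan is to follow the pattern of the proof of Theorem~\ref{theo:Mac_cells}. Denote the left-hand side of the asserted identity by $\eta_{(X,G)}(t)$. The first step is to show that $\eta$ is multiplicative under closed-invariant decompositions: if $Y\subset X$ is a closed $G$-invariant subcomplex and $Z=X\setminus Y$, then $\eta_{(X,G)}(t)=\eta_{(Y,G)}(t)\cdot\eta_{(Z,G)}(t)$. Because $Y$ and $Z$ are $G$-invariant and disjoint, no point of $Y$ ever shares a $G$-orbit with a point of $Z$. Partitioning $\{1,\ldots,n\}$ according to which coordinates land in $Y$ then yields the equivariant cell-complex identity
$$
(X^n\setminus\Delta_G,\,G_n)\;\cong\;\bigsqcup_{k=0}^{n}\ind_{G_k\times G_{n-k}}^{G_n}\bigl((Y^k\setminus\Delta_G)\times(Z^{n-k}\setminus\Delta_G)\bigr).
$$
Applying the induction relation and multiplicativity of $\chiun$ from Section~\ref{sec:Universal_Euler} turns this into the Cauchy product of power series, proving multiplicativity of $\eta$. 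The right-hand side $(1+t)^{\chiun(X,G)}$ is of course multiplicative in the same way by additivity of $\chiun$ together with the exponential law of the configuration space power structure.

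The second step is to reduce to atomic pieces via the equivariant cellular structure. Orbit by orbit, $(X,G)$ breaks up into pieces of the form $(\sigma^k\times G/H,\,G)$ where $H$ is the stabilizer of the cell; by multiplicativity of both sides, it suffices to verify the identity on each such piece. Subdividing $\sigma^k$ into $\sigma^k_1\sqcup\sigma^{k-1}\sqcup\sigma^k_2$ as in Proposition~\ref{prop:unique_for_manifolds} and using the same recursion shows
$$
\eta_{(\sigma^k\times G/H,\,G)}(t)=\eta_{(\sigma^0\times G/H,\,G)}(t)^{(-1)^k},
$$
while $\chiun(\sigma^k\times G/H,\,G)=(-1)^k T^H$ by the induction relation, so the right-hand side obeys the same $(-1)^k$ exponentiation. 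Thus it remains only to treat $(X,G)=(G/H,G)$.

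For the atom $(G/H,G)$, the space $G/H$ is a single $G$-orbit, so for $n\geq 2$ every $n$-tuple lies in $\Delta_G$; hence $(G/H)^n\setminus\Delta_G=\emptyset$ and its $\chiun$ vanishes. For $n=1$, $\chiun(G/H,G)=T^H$ by the induction relation. Consequently
$$
\eta_{(G/H,G)}(t)=1+T^H t=\lambda_{T^H}(t)=(1+t)^{T^H},
$$
the middle equality being precisely the defining formula~(\ref{eq:lambba-lambda}) for the configuration space $\lambda$-series on a monomial. The main obstacle I expect is the careful bookkeeping in the first step: one must confirm that the $G_n=G\wr S_n$ action genuinely shuffles the components labeled by subsets $S\subset\{1,\ldots,n\}$ in exactly the induction pattern $\ind_{G_k\times G_{n-k}}^{G_n}$, and that the removal of $\Delta_G$ is compatible with this decomposition (the latter being immediate since $Y$-orbits and $Z$-orbits are automatically disjoint).
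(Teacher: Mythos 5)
Your proof is correct and takes essentially the same route as the paper's: the paper dispatches this theorem as a ``minor modification'' of Theorems~\ref{theo:Mac_V-manifolds} and~\ref{theo:Mac_cells} (multiplicativity of the generating series under closed invariant decompositions, reduction to cell orbits $\sigma^k\times G/G_{\sigma^k}$, the $(-1)^k$ recursion, and verification on an atomic piece), and your argument carries out exactly those steps. Your atomic computation --- that $(G/H)^n\setminus\Delta_G=\emptyset$ for $n\ge 2$, so $\eta_{(G/H,G)}(t)=1+T^Ht=\lambda_{T^H}(t)$ --- is precisely the point where the configuration-space case differs from the symmetric-product case, and you handle it correctly.
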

     
Proofs of Theorems~\ref{theo:Mac_V-manifolds-conf} and~\ref{theo:Mac_cells-conf} are minor modifications
of those of Theorems~\ref{theo:Mac_V-manifolds} and~\ref{theo:Mac_cells}.

\end{document}